\newtheorem{theorem}{Theorem}[section]
\newtheorem{lemma}[theorem]{Lemma}
\newtheorem{proposition}[theorem]{Proposition}
\newtheorem{remark}[theorem]{Remark}
\newtheorem{definition}[theorem]{Definition}
\DeclareMathOperator{\Rc}{Rc}
\DeclareMathOperator{\Div}{Div}
\def\ppt{\frac{\partial}{\partial t}}
\def\RR{{\mathrm R}}
\def\Rc{{\mathrm {Rc}}}
\def\SS{{\mathrm S}}
\newcommand{\D}{\mathcal{D}}
\begin{document}
\title[Harnack estimates on evolving manifolds]{Harnack estimates for conjugate heat kernel on evolving manifolds}

\author{Xiaodong Cao}
\address{Department of Mathematics,
 Cornell University, Ithaca, NY 14853}
\email{cao@math.cornell.edu}

\author{Hongxin Guo}

\address{School of mathematics and information science, 
Wenzhou University, Wenzhou, Zhejiang 325035, China.
}
\email{guo@wzu.edu.cn}

\author{Hung Tran}

\address{Department of Mathematics,
 University of California, Irvine, CA 92697}
\email{hungtt1@math.uci.edu}




\date{\today}

\begin{abstract}
In this article we derive Harnack estimates for conjugate heat kernel in an abstract geometric flow.
Our calculation involves a correction term $\D$. When $\D$ is nonnegative, we are able to obtain a
Harnack inequality. Our abstract formulation provides a unified framework for some known results,
in particular including corresponding results of Ni \cite{ni04entropy},
Perelman \cite{perelman1} and Tran \cite{Tran1} as special cases. Moreover it leads to new results
in the setting of Ricci-Harmonic flow
and mean curvature flow in Lorentzian manifolds with nonnegative sectional curvature.
\end{abstract}
\keywords{}
\subjclass[2010]{Primary 53C44}

\maketitle
\tableofcontents
\section{Introduction}
Assume that $M$ is an $n$-dimensional closed manifold endowed with a one-parameter family of Riemannian
metrics $g(t)$, $t \in [0, T]$, evolving by 
\begin{equation}\label{flow equation}
\frac{\partial g(t,x)}{\partial t}=-2\alpha(t,x).
\end{equation}
Here $\alpha(t,x)$ is a one-parameter family of smooth symmetric 2-tensors on $M$.
In particular, when $\alpha=\Rc$, Eq.(\ref{flow equation}) is R. Hamilton's Ricci flow. 
Let
$$
\SS(t,x)\doteqdot g^{ij}\alpha_{ij}
$$
be the trace of $\alpha$ with respect to the time-dependent metric $g(t)$. \\

In \cite{Muller10}, R. M\"uller studied reduced volumes for the abstract flow (\ref{flow equation}) and defined the following quantity for tensor $\alpha$ and  vector $V$,
\begin{align}\label{D definition}
\mathcal D_\alpha(V)\doteqdot & \frac{\partial \SS}{\partial t}-\Delta \SS-2|\alpha|^2\nonumber\\
& +2\left(\Rc-\alpha\right)(V,V)+\langle4\operatorname{Div}(\alpha)-2\nabla \SS, V\rangle,
\end{align}
where $\Div$ is the divergence operator defined by $\Div(\alpha)_k=g^{ij}\nabla_i\alpha_{jk}$ (in local coordinates).
Under the assumption that $\D_\alpha \geq 0$, M\"uller obtained monotonicity of the reduced volumes \cite{Muller10}.
Most recently, in \cite{GPT}, the authors proved monotonicity for the entropy and the lowest eigenvalue.  In \cite{GH},
a Harnack inequality for positive solutions of the conjugate heat equation and heat equation with potential has been proved.\\

The main purpose of this article is to derive Harnack inequalities for a conjugate heat kernel in the abstract setting with $\D_\alpha\ge 0.$

\subsection{Main Results}

We consider $(M, g(t))$, $0\leq t\leq T$, to be a solution of (\ref{flow equation}) and $\tau\doteqdot T-t$, $$\Box^{\ast}\doteqdot -\frac{\partial}{\partial t}-\Delta+\SS=\frac{\partial}{\partial \tau}-\Delta+\SS.$$ A function $u=(4\pi\tau)^{-n/2}e^{-f}$ is a solution to the conjugate heat equation if,
\begin{align}
\label{conjugate heat eqn}
\Box^{\ast} u=0.
\end{align}
We also denote $$H(x,t;y,T)=(4\pi(T-t))^{-n/2}e^{-h}=(4\pi\tau)^{-n/2}e^{-h}$$ to be a heat kernel. That is, based at a fixed $(x,t)$, H is the fundamental solution of heat equation $\Box{H}=0$, and similarly for fixed $(y,T)$ and conjugate heat equation $\Box^{\ast}H=0$.
Our first result is computational.

\begin{theorem}
\label{conjHar}
Let $$v= \left(\tau\left(2\Delta f-|\nabla f|^2+\SS\right)+f-n\right)u,$$ then we have
\begin{align}
\Box^{\ast} v &=-2\tau u\left|\alpha+\nabla\nabla f-\frac g{2\tau}\right|^2-\tau u\mathcal D_\alpha(\nabla f).
\end{align}
\end{theorem}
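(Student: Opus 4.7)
The plan is to adapt Perelman's proof strategy for Ricci flow to this abstract setting, carefully tracking the extra contributions that arise when $\alpha \neq \Rc$. First I would write $v = Hu$ with $H = \tau P + f - n$ and $P = 2\Delta f - |\nabla f|^2 + \SS$. A direct product-rule computation, using that $\nabla u = -u\nabla f$ and $\Box^{\ast} u = 0$, yields
$$\Box^{\ast} v = u\left[(\partial_\tau - \Delta) H + 2\langle \nabla H, \nabla f\rangle\right].$$
Meanwhile, $\Box^{\ast} u = 0$ applied to $u = (4\pi\tau)^{-n/2}e^{-f}$ gives the pointwise equation $\partial_\tau f = \Delta f - |\nabla f|^2 + \SS - \frac{n}{2\tau}$, which I would use to simplify the $(f-n)$-piece. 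This reduces the problem to evaluating the twisted operator $(\partial_\tau - \Delta) + 2\langle \nabla f, \nabla \cdot\rangle$ applied to $P$.

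The core of the proof is this evaluation, which I would carry out term by term. For $\Delta f$, the variation $\partial_\tau g^{ij} = -2\alpha^{ij}$ together with the standard evolution of Christoffel symbols under $\partial_t g = -2\alpha$ produces a contribution involving $\Div(\alpha)$ and $\nabla \SS$; for $|\nabla f|^2$, the variation of $g^{ij}$ contributes an $\alpha(\nabla f, \nabla f)$ term, while Bochner's identity introduces $\Rc(\nabla f, \nabla f)$ together with the perfect-square piece $|\nabla\nabla f|^2$; for $\SS$, the contribution $-\partial_t \SS - \Delta \SS + 2\langle \nabla \SS, \nabla f\rangle$ is immediate. Assembling the three pieces I expect the $\nabla \Delta f$ cross-terms to cancel, the $|\nabla \nabla f|^2$ and $\langle \alpha, \nabla \nabla f\rangle$ contributions to assemble, and the residual to reorganize into the compact identity
$$(\partial_\tau - \Delta) P + 2\langle \nabla P, \nabla f\rangle = -2|\alpha + \nabla\nabla f|^2 - \D_\alpha(\nabla f),$$
where the two pieces come respectively from completing squares in $|\alpha|^2 + 2\langle \alpha, \nabla\nabla f\rangle + |\nabla\nabla f|^2$ and from recognizing the definition (\ref{D definition}) of $\D_\alpha$ in what remains.

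Finally, substituting this back and collecting the $\tau$-independent remainder yields an expression of the shape $2\Delta f + 2\SS - \frac{n}{2\tau} - 2\tau|\alpha + \nabla\nabla f|^2 - \tau \D_\alpha(\nabla f)$, and completing the square via
$$-2\tau \left| \alpha + \nabla\nabla f - \frac{g}{2\tau}\right|^2 = -2\tau|\alpha + \nabla\nabla f|^2 + 2(\SS + \Delta f) - \frac{n}{2\tau}$$
recovers the claimed formula. The main obstacle is the bookkeeping in the middle step: isolating the $\Rc - \alpha$ combination from the competing Bochner ($\Rc$) and metric-variation ($\alpha$) contributions, and checking that the $\Div(\alpha)$ and $\nabla\SS$ pieces fit \emph{exactly} into $\D_\alpha(\nabla f)$ with no leftover. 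Once the terms are grouped correctly, recognizing the Perelman-type perfect square at the end is routine.
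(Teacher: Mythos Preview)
Your proposal is correct and complete; the bookkeeping you flag as the main obstacle does work out exactly as you describe, and the final completion of the square is right. The organization, however, differs from the paper's. You factor out $u$ at the outset via the conjugation identity $\Box^{\ast}(Hu)=u\bigl[(\partial_\tau-\Delta)H+2\langle\nabla H,\nabla f\rangle\bigr]$ and then compute the drift heat operator on $P=2\Delta f-|\nabla f|^2+\SS$ directly. The paper instead works in the variable $\log u$ and builds the same quantity in stages: it computes $-\Box^{\ast}(u\log u)=u|\nabla\log u|^2+u\SS$, applies $-\Box^{\ast}$ a second time, computes $-\Box^{\ast}(\Delta u)$ separately, and then observes that the combination $u|\nabla\log u|^2+u\SS-2\Delta u$ (which equals your $uP$) has the clean evolution $-\Box^{\ast}(uP)=2u|\alpha-\nabla\nabla\log u|^2+u\,\D_\alpha(-\nabla\log u)$. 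Only at the end does it pass to $f=-\log u-\tfrac{n}{2}\log(4\pi\tau)$. Your route is more direct and keeps the drift-Laplacian structure visible throughout; the paper's iterated-$\Box^{\ast}$ route avoids the explicit Bochner/variation-of-Laplacian bookkeeping by letting the operator act on already-assembled quantities, at the cost of a less transparent intermediate expression.
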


Secondly, we obtain the following Harnack estimate.

\begin{theorem}
\label{mainth}
If $\D_{\alpha}\ge 0$, then the following inequality holds,
\begin{align}\label{main harnack}
\tau\left(2\Delta h-|\nabla h|^2+\SS\right)+h-n\le0.
\end{align}
\end{theorem}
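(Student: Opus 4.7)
The plan is to combine Theorem~\ref{conjHar} with a parabolic maximum-principle argument, in the spirit of Perelman's proof for the Ricci flow case. Specializing Theorem~\ref{conjHar} to $u = H$ (the conjugate heat kernel based at $(y,T)$) and $f = h$, and using $\D_\alpha(\nabla h) \geq 0$ together with $|\alpha + \nabla\nabla h - g/(2\tau)|^2 \geq 0$, one obtains
\begin{equation*}
\Box^{*}(P H) \le 0, \qquad \text{where } P := \tau\bigl(2\Delta h - |\nabla h|^2 + \SS\bigr) + h - n.
\end{equation*}
The target is to upgrade this into the pointwise bound $P \leq 0$.

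First I would convert this inequality on the product $PH$ into a pointwise parabolic inequality for $P$ alone. Applying the product rule for $\Box^{*} = \partial_\tau - \Delta + \SS$, and using $\Box^{*} H = 0$ together with $\nabla H = -H\nabla h$, a direct computation gives
\begin{equation*}
\Box^{*}(PH) = H\bigl(P_\tau - \Delta P + 2\nabla h \cdot \nabla P\bigr).
\end{equation*}
Since $H > 0$, dividing yields the forward (in $\tau$) parabolic inequality
\begin{equation*}
P_\tau - \Delta P + 2\nabla h \cdot \nabla P \leq 0,
\end{equation*}
governed by the drift-Laplacian $\Delta - 2\nabla h \cdot \nabla$. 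On the closed manifold $M$, the standard maximum principle applied on any interval $[\tau_0, T]$ with $\tau_0 > 0$ then shows that $\max_M P(\cdot, \tau)$ is non-increasing in $\tau$.

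The desired conclusion $P \le 0$ therefore reduces to the initial-time bound
\begin{equation*}
\limsup_{\tau \to 0^+}\,\max_M P(\cdot, \tau) \leq 0,
\end{equation*}
which is the main obstacle. At $\tau = 0$ the heat kernel degenerates to a $\delta$-distribution at $y$, and both $h$ and its derivatives blow up. I would dispose of this via the short-time asymptotic expansion
\begin{equation*}
H(\cdot, T - \tau; y, T) \sim (4\pi\tau)^{-n/2} e^{-d(\cdot, y)^2/(4\tau)}\bigl(1 + O(\tau)\bigr)
\end{equation*}
for heat kernels on manifolds with evolving metrics. Substituting this into $P$, the singular $d^2/(4\tau)$ contributions of $h$, $\tau|\nabla h|^2$, and $\tau\Delta h$ cancel to leading order (in flat space they cancel identically, giving $P \equiv 0$), and a careful check of the remaining terms gives $\max_M P(\cdot, \tau) \to 0$ as $\tau \to 0^+$. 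Making this asymptotic analysis rigorous on the time-dependent background $\partial_t g = -2\alpha$ is the technical heart of the proof; once in place, the maximum principle immediately yields~(\ref{main harnack}).
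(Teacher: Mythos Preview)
Your reduction of $\Box^{*}(PH)\le 0$ to the drift-diffusion inequality $P_\tau-\Delta P+2\nabla h\cdot\nabla P\le 0$, and the monotonicity of $\max_M P$ on any $[\tau_0,T]$ with $\tau_0>0$, are fine. The gap is the claimed initial bound $\limsup_{\tau\to 0^+}\max_M P(\cdot,\tau)\le 0$. The asymptotic expansion you invoke is for $H$, not for $h$ or its derivatives; passing from it to \emph{uniform pointwise} control of $\Delta h$ and $|\nabla h|^2$ over all of $M$ is exactly the hard part. At the cut locus of $y$ the leading term $d_T^2(\cdot,y)/(4\tau)$ is not even $C^1$, so your cancellation argument cannot be run there, yet $h$ itself is smooth---so the expansion cannot be differentiated termwise near the cut locus. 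Even away from the cut locus, a direct computation gives $P\to \tfrac12\Delta d_T^2-n-\log u_0+d_T\,\partial_r\log u_0$ as $\tau\to 0$, and arguing that this is $\le 0$ (rather than merely bounded) is not a ``careful check'' but a substantive statement requiring the transport equation for $u_0$; globally it is at least as delicate as the theorem itself. This is precisely why Perelman's original argument, and the expositions following it, avoid the pointwise route.

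The paper proceeds instead by an integrated (weak) maximum principle: for every positive heat solution $\Phi$ one has $\partial_t\int_M v\Phi\,d\mu=-\int_M \Phi\,\Box^{*}v\,d\mu\ge 0$ directly from Theorem~\ref{conjHar}, so it suffices to show $\lim_{t\to T}\int_M v\Phi\,d\mu=0$ (Proposition~\ref{limit0}); the arbitrariness of $\Phi$ then forces $v\le 0$ pointwise. Establishing that limit is the technical core and uses a space-only gradient estimate (Lemma~\ref{gradconjS}), an $L^\infty$ bound on conjugate heat solutions (Lemma~\ref{infconjS}), the comparison $h\le\ell$ with the reduced distance (Lemma~\ref{compareRDandconjheat}), and the entropy fact $\lim_{\tau\to 0^+}\mu_\alpha(g,\tau)=0$ (Lemma~\ref{basicmu}{\bf d}). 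The integral formulation is what tames the $\tau\to 0$ singularity: the exponential decay of $H$ suppresses cut-locus contributions, and only averaged control of $|\nabla h|^2$ is needed.
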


\begin{remark}
For the Ricci flow, where $\alpha=\Rc$, one has $\D = 0$; \eqref{main harnack}
has been proved by G. Perelman \cite{perelman1}. On a static Riemannian manifold where $\alpha=0$ one has $\D=\Rc$, and \eqref{main harnack} has been proved by L. Ni in \cite{ni04entropy} for static manifolds with nonnegative Ricci curvature. Another special case of \eqref{main harnack} was recently proved by the third author for the extended Ricci flow in \cite{Tran1}. \eqref{main harnack} is new for M\"uller's Ricci-Harmonic flow and  mean curvature flow in a Lorentzian manifold with nonnegative sectional curvature. The detailed calculations of $\D$ can be found in \cite{Muller10}.
\end{remark}

\textbf{Acknowledgement}.  X. Cao was partially supported by a grant from the
Simons Foundation (\#280161) and by the Jeffrey Sean
Lehman Fund from Cornell University; H. Guo was supported by NSF of China (Grant No. 11171143)
  and Zhejiang Provincial Natural Science Foundation of China (Project No. LY13A010009).

\section{Preliminaries}
\subsection{Evolution Equations}
In this section, we collect several evolution equations and prove Theorem \ref{conjHar}.\\

For the Laplace-Beltrami operator $\Delta$ with respect to $g(t)$ we have,
\begin{align}\label{evolution of laplace}
\left(\frac{\partial}{\partial t}\Delta\right) f=2\langle\alpha, \nabla\nabla f\rangle+\langle 2\operatorname{Div}(\alpha)-\nabla \SS,\nabla f\rangle,
\end{align}
where $f$ is any smooth function on $M$. This formula can be found in standard textbooks, for instance \cite{chowluni}.\\

Now we assume $u$ is a solution to the conjugate heat equation.
The operator $-\Box^{\ast}$ acting on the term $u\log u$ produces,
\begin{align}\label{1st derivative of ulogu}
-\Box^{\ast} u\log u=u|\nabla\log u|^2+u\SS.
\end{align}
The same operator acts once more and we have,
\begin{align}
&-\Box^{\ast} \left(u|\nabla\log u|^2+u\SS\right) \nonumber\\
&=2u\alpha(\nabla\log u, \nabla\log u)+4\langle{\nabla \SS,\nabla u}\rangle+u\frac{\partial \SS}{\partial t}+2u|\nabla\nabla\log u|^2\nonumber\\
&+2u\Rc(\nabla\log u,\nabla \log u)+u\Delta \SS\nonumber\\
\label{2nd derivative of ulogu}
&=2u|\nabla\nabla\log u-\alpha|^2+4u\langle{\alpha,\nabla\nabla\log u}\rangle+2u\alpha(\nabla\log u,\nabla\log u)\\
&-2u|\alpha|^2+4\langle{\nabla \SS,\nabla u}\rangle+u\frac{\partial \SS}{\partial t}+2u\Rc(\nabla\log u,\nabla \log u)+u\Delta \SS. \nonumber
\end{align}

Notice that,
\begin{align}
-\Box^{\ast}(\Delta u)=
&2\langle{\alpha,\nabla\nabla u}\rangle+\langle2\Div(\alpha)-\nabla \SS, \nabla u\rangle+2\langle{\nabla \SS,\nabla u}\rangle+u\Delta \SS\nonumber\\
=&2u\langle{\alpha,\nabla\nabla\log u}\rangle+2u\alpha(\nabla\log u,\nabla \log u)\\
&+\langle2\Div(\alpha)-\nabla \SS, \nabla u\rangle+2\langle{\nabla \SS,\nabla u}\rangle+u\Delta \SS. \nonumber
\end{align}
Thus, by (\ref{D definition}),  we have, for $V=-\nabla\log u$,
\begin{equation}
-\Box^{\ast}\left(u|\nabla\log u|^2+u\SS-2\Delta u\right)
=2u|\nabla\nabla\log u-\alpha|^2+u\mathcal D_\alpha(V).
\end{equation}

Moreover,
\begin{align}
&-\Box^{\ast}\left(\tau \left(u|\nabla\log u|^2+u\SS-2\Delta u\right)-u\log u-\frac {nu} 2\log\tau\right)\\\nonumber
&=2\tau u|\alpha-\nabla\nabla\log u-\frac g{2\tau}|^2+\tau u\mathcal D_\alpha(-\nabla\log u).
\end{align}
In the calculation above, if we add a normalization term $c_nu$ to the left hand side, we get the same on the right hand side since
$\Box^{\ast}u=0.$
Thus, we have the following result.

\begin{lemma}
\begin{align}
&\Box^{\ast}\left[\tau \left(u|\nabla\log u|^2+u\SS-2\Delta u\right)
-u\log u-\frac {nu} 2\log\left(4\pi\tau\right)-nu\right]\\\nonumber
&=-2\tau u\left|\alpha-\nabla\nabla\log u-\frac g{2\tau}\right|^2-\tau u\mathcal D_\alpha(-\nabla\log u).
\end{align}
\end{lemma}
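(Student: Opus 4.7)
The strategy is to stack successive applications of $-\Box^{\ast}$, starting from the identity $\Box^{\ast} u = 0$, so that each layer cancels unwanted terms from the previous one and the end product is a single perfect square plus the $\D_{\alpha}$ correction.

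First I would compute $-\Box^{\ast}(u \log u)$ by the product rule; substituting $\partial_\tau u - \Delta u = -\SS u$ (from $\Box^{\ast} u = 0$) yields the clean identity $-\Box^{\ast}(u\log u) = u|\nabla \log u|^2 + u\SS$. I would then apply $-\Box^{\ast}$ once more to this expression, which invokes the Bochner formula for $|\nabla\log u|^2$ together with the evolution equation (1.2.3) for $\Delta$; completing the square via $2u|\nabla\nabla\log u|^2 = 2u|\nabla\nabla\log u - \alpha|^2 + 4u\langle \alpha, \nabla\nabla\log u\rangle - 2u|\alpha|^2$ isolates the Hessian square. Separately I would compute $-\Box^{\ast}(\Delta u)$ directly from (1.2.3). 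The combination $F := u|\nabla \log u|^2 + u\SS - 2\Delta u$ is engineered so that all cross-terms involving $\alpha$, $\nabla\nabla\log u$, and $\nabla \SS$ cancel, while the survivors reassemble precisely into $\D_{\alpha}(V)$ with $V = -\nabla\log u$ per (1.1.2), producing $-\Box^{\ast} F = 2u|\nabla\nabla\log u - \alpha|^2 + u \D_{\alpha}(V)$.

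Next I would multiply the argument by $\tau$. Because $\Box^{\ast}(\tau F) = F + \tau \Box^{\ast} F$, a stray copy of $F$ appears and must be absorbed. I would subtract $u\log u$, whose $-\Box^{\ast}$ contributes $u|\nabla\log u|^2 + u\SS$ by the first step, and $\frac{n}{2}u\log\tau$, whose $\Box^{\ast}$ equals $\frac{nu}{2\tau}$ since $\Box^{\ast}u = 0$ and $\partial_\tau \log\tau = 1/\tau$. The residual terms are then rewritten using $\Delta u = u\Delta\log u + u|\nabla\log u|^2$ together with the trace identities $\langle \alpha, g\rangle = \SS$, $\langle \nabla\nabla\log u, g\rangle = \Delta\log u$, and $|g|^2 = n$, which completes the square into $2\tau u\bigl|\alpha - \nabla\nabla\log u - g/(2\tau)\bigr|^2$.

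Finally, to reach the exact form stated, I would replace $\log\tau$ by $\log(4\pi\tau)$ and subtract $nu$ inside the $\Box^{\ast}$ bracket; both adjustments amount to adding a constant multiple of $u$, which $\Box^{\ast}$ annihilates because $\Box^{\ast}u = 0$, so the right-hand side is unaffected, and negating both sides gives the stated lemma. The main obstacle is the bookkeeping in the step that identifies $\D_{\alpha}$: one must check that the particular coefficient $-2$ on $\Delta u$ is what makes every cross-term cancel and leaves exactly the four pieces of $\D_{\alpha}(V)$ behind. The subsequent completion of the square in the $\tau$-multiplication step is then essentially forced by the Perelman-type shape of the functional, but still requires the trace identities to be applied with care so that the leftover linear-in-$u$ terms line up.
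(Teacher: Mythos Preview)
Your proposal is correct and follows essentially the same route as the paper: compute $-\Box^{\ast}(u\log u)$, apply $-\Box^{\ast}$ again and to $\Delta u$, combine into $F=u|\nabla\log u|^2+u\SS-2\Delta u$ to isolate $2u|\nabla\nabla\log u-\alpha|^2+u\D_\alpha(-\nabla\log u)$, then multiply by $\tau$, subtract $u\log u$ and $\tfrac{n}{2}u\log\tau$ to complete the square, and finally adjust by constants times $u$. This is exactly the paper's derivation, including the final observation that adding a multiple of $u$ is harmless since $\Box^{\ast}u=0$.
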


Theorem \ref{conjHar} follows by realizing that $f\doteqdot -\log u-\frac n 2\log(4\pi\tau).$

\subsection{Asymptotic Behavior and Reduced Geometry}

Let's recall the asymptotic behavior of the heat kernel as $t\rightarrow T$.
\begin{theorem}\cite[Theorem 24.21]{chowetc3} \label{asymptoticconj}
For $\tau=T-t$,
\begin{equation*}
H(x,t;y,T) \sim \frac{e^{-\frac{d_{T}^{2}(x,y)}{4\tau}}}{(4\pi\tau)^{n/2}}\Sigma_{j=0}^{\infty}\tau^{j}u_{j}(x,y,\tau).
\end{equation*}
More precisely, there exist $t_{0}>0$ and a sequence $u_{j}\in C^{\infty}(M\times M\times [0,t_{0}])$ such that,
\begin{equation*}
H(x,t;y,T)-\frac{e^{-\frac{d_{T}^{2}(x,y)}{4\tau}}}{(4\pi\tau)^{n/2}}\Sigma_{j=0}^{k}\tau^{j}u_{j}(x,y,T-l)=w_{k}(x,y,\tau),
\end{equation*}
with
\begin{align*}
u_{0}(x,x,0)&=1,\\
w_{k}(x,y,\tau)&=O(\tau^{k+1-\frac{n}{2}}),
\end{align*}
as $\tau\rightarrow 0$ uniformly for all $x,y\in M$.
\end{theorem}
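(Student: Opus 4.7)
The plan is to carry out the classical parametrix construction, adapted to the evolving metric $g(t)$. Fix the basepoint $(y,T)$ and set $\tau = T - t$. First I would seek a formal ansatz
\begin{equation*}
H_N(x,t;y,T) = \frac{e^{-d_T^2(x,y)/(4\tau)}}{(4\pi\tau)^{n/2}} \sum_{j=0}^N \tau^j u_j(x,y,\tau),
\end{equation*}
with the $u_j$ to be chosen so that $\Box^*_{(x,t)} H_N$ vanishes to as high an order in $\tau$ as possible. Apply $\Box^* = \partial_\tau - \Delta_{g(T-\tau)} + \SS$ to this product; the $\tau$-differentiation of the Gaussian weight produces negative powers of $\tau$, and matching coefficients of each power converts the equation into transport ODEs for the $u_j$ along the $g(T)$-geodesics emanating from $y$.

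I would work in a normal neighborhood $U$ of $y$ with respect to the frozen metric $g(T)$, where $r := d_T(\cdot,y)$ is smooth. The leading-order cancellation yields a first-order linear equation for $u_0$ whose solution, normalized by $u_0(y,y,0) = 1$, is the square root of the inverse Jacobian of $\exp_y^{g(T)}$---the classical Minakshisundaram--Pleijel coefficient. Each subsequent $u_j$ is then obtained by integrating an inhomogeneous linear ODE along the same radial geodesic, with source built from $u_{j-1}$, $\SS$, and the Taylor expansion of $\Delta_{g(T-\tau)} - \Delta_{g(T)}$ in $\tau$. Smooth dependence of $g(t)$ on $t$ guarantees each $u_j \in C^\infty(M \times M \times [0,t_0])$ for some $t_0 > 0$.

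Next I would globalize by multiplying the local ansatz by a smooth cutoff supported in a uniform neighborhood of the diagonal; because the Gaussian weight decays off-diagonal like $e^{-c/\tau}$, truncation contributes only exponentially small errors. By construction the residual satisfies $K_N := \Box^* H_N = O(\tau^{N+1-n/2})$ as a Gaussian-type function. Duhamel's principle then gives $H = H_N + H \ast K_N$ as a spacetime convolution, and classical estimates using Gaussian bounds for $H$ together with the vanishing order of $K_N$ yield $w_N = O(\tau^{N+1-n/2})$ uniformly, completing the expansion.

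The main obstacle is bookkeeping the interplay between the two metrics at play: the Gaussian weight is frozen at time $T$, while the Laplacian evolves with $t$. One must expand $\Delta_{g(T-\tau)}$ carefully in powers of $\tau$ to extract the correct inhomogeneities in each transport equation and to check that every $u_j$ is indeed determined by a first-order ODE along the $g(T)$-geodesic. Once this organization is in place, the remaining work is the standard Duhamel-type remainder estimate.
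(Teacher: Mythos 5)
Note that the paper does not prove this result---it is quoted directly from \cite[Theorem 24.21]{chowetc3}. Your sketch of the Minakshisundaram--Pleijel parametrix construction adapted to an evolving metric (Gaussian ansatz with weight frozen at $g(T)$, transport equations for the $u_j$ along $g(T)$-geodesics with $u_0$ the van Vleck factor, a cutoff near the diagonal, and a Duhamel-type remainder estimate) is the standard route and is in fact the method the cited reference carries out, so the proposed approach is correct and consistent with the source the paper invokes.
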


Then following \cite{Muller10}, we can define reduced length and distance.

\begin{definition}
Given $\tau(t)=T-t$, we define the $\mathcal{L}$-length of a curve $\gamma: [\tau_{0},\tau_{1}]\mapsto N$, $[\tau_{0},\tau_{1}]\subset[0,T]$ by,
\begin{equation}
\mathcal{L}(\gamma):=\int_{\tau_{0}}^{\tau_{1}}\sqrt{\tau}(\SS(\gamma(\tau))+|\dot{\gamma}(\tau)|^2)d\tau.
\end{equation}
For a fixed point $y\in N$ and $\tau_{0}=0$, the backward reduced distance is defined as,
\begin{equation}
\label{adaptedRD}
\ell (x,\tau_{1}):=\inf_{\gamma\in \Gamma}\{\frac{1}{2\tau_{1}}\mathcal{L}(\gamma)\},
\end{equation}
where $\Gamma=\{\gamma:[0,\tau_{1}]\mapsto M, \gamma(0)=y, \gamma(\tau_{1})=x\}$.\\
The backward reduced volume is defined as 
\begin{equation}
V(\tau):=\int_{M}(4\pi\tau)^{-n/2}e^{-\ell (y,\tau)}d\mu_{\tau}(y).
\end{equation}
\end{definition}

The next result, mainly from \cite{Muller10}, relates the reduced distance defined in (\ref{adaptedRD}) with the distance at time T.
\begin{lemma}
\label{compareRDandconjheat}
Let $L(x,\tau)=4\tau\ell(x,\tau)$ then we have the followings:\\
{\bf a.} Assume that there exists $k_{1},k_{2}\geq 0$ such that $-k_{1}g(t)\leq \alpha(t) \leq k_{2}g(t)$ for $t\in[0,T]$, then $L$ is smooth amost everywhere and a local Lipschitz function on $N\times [0,T]$. Furthermore,
$$e^{-2k_{1}\tau}d_{T}^2(x,y)-\frac{4k_{1}n}{3}\tau^2\leq L(x,\tau)\leq e^{2k_{2}\tau}d_{T}^2(x,y)+\frac{4k_{2}n}{3}\tau^2 .$$
{\bf b.} If $\D_{\alpha}\ge 0$, then $\Box^{\ast}\Big(\frac{e^{-\frac{L(x,\tau)}{4\tau}}}{(4\pi\tau)^{n/2}}\Big)\leq 0.$\\
{\bf c.} For the same point y in the definition of reduced distance and $H(x,t;y,T)=(4\pi\tau)^{-n/2}e^{-h}$, then $h(x,t;y,T)\leq \ell (x,T-t)$.
\end{lemma}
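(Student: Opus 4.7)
\emph{Part (a).} The plan is to translate the tensor bound $-k_1 g\le \alpha\le k_2 g$ into the pointwise metric comparison $e^{-2k_1\tau}g(T)\le g(\tau)\le e^{2k_2\tau}g(T)$ via Gronwall applied to $\partial_\tau g=2\alpha$, which also yields $-nk_1\le \SS\le nk_2$. For the upper bound on $L$, I would insert a specific competitor into the infimum: the reparametrized $g(T)$-geodesic $\gamma(\tau)=\sigma(\sqrt{\tau/\tau_1})$, where $\sigma:[0,1]\to M$ is a minimizing $g(T)$-geodesic from $y$ to $x$ with constant speed $d_T(x,y)$. Direct integration of $\sqrt\tau(\SS+|\dot\gamma|^2_{g(\tau)})$ using the pointwise bounds produces the stated inequality. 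For the lower bound, I would take an arbitrary admissible curve and apply Cauchy--Schwarz with weights $\sqrt{\sqrt\tau\,e^{-2k_1\tau}}$ and its reciprocal to bound $d_T^2(x,y)$ in terms of $\int_0^{\tau_1}\sqrt\tau\,e^{-2k_1\tau}|\dot\gamma|^2_{g(T)}\,d\tau$, then dominate this by $\int_0^{\tau_1}\sqrt\tau|\dot\gamma|^2_{g(\tau)}\,d\tau$. Lipschitz regularity of $L$ follows from existence of minimizing $\mathcal L$-geodesics (coercivity is supplied by the metric bounds) and smoothness of the $\mathcal L$-exponential map off a measure-zero cut locus, exactly as in Müller \cite{Muller10}.

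\emph{Part (b).} I would transplant Perelman's classical reduced-distance computation to the abstract flow. First and second variation of the $\mathcal L$-energy along a minimizing $\mathcal L$-geodesic yield pointwise formulas for $\partial_\tau\ell$ and $|\nabla\ell|^2$ together with an upper bound on $\Delta\ell$; the abstract generalization introduces an additional contribution of the form $\int_0^{\tau_1}\tau^{3/2}\D_\alpha(\dot\gamma)\,d\tau$ in the second-variation identity, which is nonnegative by hypothesis and therefore preserves the inequality. Assembling these gives the pointwise inequality $-\partial_\tau\ell+\Delta\ell-|\nabla\ell|^2+\SS-\frac{n}{2\tau}\le 0$, which rewrites by direct calculation as $\Box^{\ast}\bigl[(4\pi\tau)^{-n/2}e^{-\ell}\bigr]\le 0$. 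Because $\ell$ is only locally Lipschitz, the inequality must be interpreted in the barrier sense, which is enough for what follows.

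\emph{Part (c).} Set $\tilde H\doteqdot(4\pi\tau)^{-n/2}e^{-\ell}$ and $w\doteqdot H-\tilde H$, so that $\Box^{\ast}w\ge 0$ by (b) together with $\Box^{\ast}H=0$. Let $v$ solve the forward heat equation $v_t-\Delta v=0$ on $[t_0,T]$ with arbitrary nonnegative initial data at some $t_0<T$; the maximum principle gives $v\ge 0$ throughout. Using $\partial_t d\mu_t=-\SS\,d\mu_t$ together with integration by parts yields
\begin{equation*}
\frac{d}{dt}\int_M w\,v\,d\mu_t \;=\; -\int_M v\,\Box^{\ast}w\,d\mu_t \;\le\; 0.
\end{equation*}
Theorem \ref{asymptoticconj} together with the upper distance bound from (a) guarantees that both $H(\cdot,t;y,T)$ and $\tilde H(\cdot,T-t)$ concentrate at $\delta_y$ as $t\to T^{-}$, so $\int_M w\,v\,d\mu_t\to 0$. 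Monotonicity then forces $\int_M w(\cdot,t_0)\,v(\cdot,t_0)\,d\mu_{t_0}\ge 0$ for every nonnegative initial profile; testing against approximate delta masses at an arbitrary $x\in M$ gives $w(x,t_0)\ge 0$, i.e.\ $h(x,t_0;y,T)\le \ell(x,T-t_0)$. The principal technical difficulty is the low regularity of $\ell$: the differential identities in (b) must be upgraded to barrier inequalities valid only off a measure-zero set, and the integration-by-parts computation above must accommodate the singularities of $\ell$ along the $\mathcal L$-cut locus and at the base point $y$; the $\tau\to 0$ matching of the two concentrating profiles is handled by combining (a) with Theorem \ref{asymptoticconj}.
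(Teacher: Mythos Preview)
Your proposal is correct and follows essentially the same route as the paper. For parts (a) and (b) the paper simply cites \cite[Lemmas 4.1, 5.15]{Muller10}, and what you sketch (Gronwall metric comparison plus competitor/Cauchy--Schwarz for (a), Perelman-type first/second variation with the extra $\int \tau^{3/2}\D_\alpha(\dot\gamma)\,d\tau$ term for (b)) is precisely the content of those lemmas; for part (c) the paper invokes the maximum principle directly after observing $\lim_{\tau\to 0}(4\pi\tau)^{-n/2}e^{-L/4\tau}=\delta_y$, whereas your duality argument against forward heat solutions is the standard weak-form implementation of that same comparison, so there is no substantive difference.
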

\begin{proof} Parts {\bf a.} and {\bf b.} follow from \cite[Lemmas 4.1, 5.15]{Muller10} respectively.

For part {\bf c.} we provide a brief argument here (for more details, see \cite[Lemma 16.49]{chowetc2}).

We first observe that part {\bf a.} implies $\lim_{\tau\rightarrow 0}L(x,\tau) =d_{T}^2(y,x)$ and,
\[\lim_{\tau\rightarrow 0}\frac{e^{-\frac{L_{w}(x,\tau)}{4\tau}}}{(4\pi\tau)^{n/2}}=\delta_{y}(x),\]
since  Riemannian manifolds locally look like Euclidean. It then follows from part {\bf b.} and maximum principle that,
\begin{equation*}
H(x,t;y,T)\geq \frac{e^{-\frac{L(x,\tau)}{4\tau}}}{(4\pi\tau)^{n/2}}=\frac{e^{-\frac{L(x,T-t)}{4\tau}}}{(4\pi(T-t))^{n/2}}.
\end{equation*}
Hence, 
\begin{equation*}
h(x,t;y,T)\leq \frac{L(x,\tau)}{4\tau}=\ell (x,\tau)=\ell (x,T-l).
\end{equation*}
\end{proof}
\subsection{Entropy Formulas}
In this subsection, we define several functionals and collect their properties.  
\begin{definition}
Along flow (\ref{flow equation}), for $h$ satisfying 
$\int_{M}(4\pi\tau)^{-n/2}e^{-h}d\mu=1$, we define 
\begin{equation}
\label{defineW}
\mathbb{W}_{\alpha}(g,\tau,h)\doteqdot \int_{M}\Big(\tau(|\nabla h|^2+\SS)+(h-n)\Big)(4\pi\tau)^{-n/2}e^{-h}d\mu.
\end{equation}
Associated functionals are defined as follows:
\begin{align}
\mu_{\alpha}(g,\tau)&=\inf_{f}{\mathbb{W}_{\alpha}(g, h, \tau)},\\
\upsilon_{\alpha}(g)&=\inf_{\tau>0}{\mu_{\alpha}(g,\tau)}.
\end{align}
\end{definition}
\begin{remark} Since $\alpha$ is a $(2,0)$-tensor, $\SS$ scales like the inverse of the metric. Thus, these functionals satisfy diffeomorphism invariance and the following scaling rules:
\begin{align*}
\mathbb{W}_{\alpha}(g,\tau,h)&= \mathbb{W}_{\alpha}(cg,c\tau,h),\\
\mu_{\alpha}(g,\tau)&=\mu_{\alpha}(cg, c\tau),\\
\upsilon_{\alpha}(g,u)&=\upsilon_{\alpha}(cg).
\end{align*}
\end{remark}

Next, we collect some useful results. 
\begin{lemma}
\label{basicmu}
On a closed Riemannian manifold $(M,g(t))$, $t\in [0,T]$,  evolved by  (\ref{flow equation}), with $\D_{\alpha}\geq 0$. Let $\tau=T-t$, the following holds:\\
{\bf a.}  $\mathbb{W}_{\alpha}(g,\tau,h)$ is non-decreasing in time t (non-increasing in $\tau$).\\
{\bf b.} There exists a smooth minimizer $h_{\tau}$ for $\mathbb{W}_{\alpha}(g,\tau,.)$ which satisfies
\[
\tau(2\triangle{h_{\tau}}-|\nabla{h_{\tau}}|^2+\SS)+h_{\tau}-n=\mu_{\alpha}(g,\tau).
\]
In particular, $\mu_{\alpha}(g,\tau)$ is finite.\\
{\bf c.}   $\mu_{\alpha}(g,\tau)$ is non-decreasing in time t.\\
{\bf d.} $\lim_{\tau\rightarrow 0^{+}}\mu_{\alpha}(g,\tau)=0$.\\
\end{lemma}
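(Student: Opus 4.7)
The plan is to address the four parts in order, drawing on Theorem \ref{conjHar} for (a), standard calculus of variations and log-Sobolev theory for (b), a backward conjugate-heat comparison for (c), and a concentration/rescaling argument for (d).

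For (a), I would first recognize $\mathbb{W}_\alpha$ as the quantity computed in Theorem \ref{conjHar}. Integration by parts against $u=(4\pi\tau)^{-n/2}e^{-h}$ gives $\int|\nabla h|^2 u\,d\mu=\int u\,\Delta h\,d\mu$, hence
\begin{equation*}
\mathbb{W}_\alpha(g,\tau,h)=\int\bigl[\tau(2\Delta h-|\nabla h|^2+\SS)+h-n\bigr] u\,d\mu=\int v\,d\mu
\end{equation*}
with $v$ as in Theorem \ref{conjHar} (identifying $f=h$). Assuming $h(t)$ evolves so that $\Box^*u=0$, the normalization is preserved; differentiating in $t$ and using $\partial_t d\mu=-\SS d\mu$ together with $\partial_t v=-\Box^* v-\Delta v+\SS v$ yields $\ddt\int v\,d\mu=-\int\Box^* v\,d\mu\ge 0$ by Theorem \ref{conjHar} and the hypothesis $\D_\alpha\ge 0$.

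For (b), I substitute $w=e^{-h/2}$ so that $u=(4\pi\tau)^{-n/2}w^2$. The constrained minimization becomes minimizing a functional with gradient term $4\tau\int|\nabla w|^2$, a bounded potential $\tau\SS w^2$, and the entropy $-\int w^2\log w^2$, subject to an $L^2$ normalization. The entropy is controlled by the logarithmic Sobolev inequality on a closed Riemannian manifold, giving coercivity and lower semicontinuity on $H^1(M)$. The direct method then produces a nonnegative minimizer $w_\tau$; strict positivity and smoothness follow from the maximum principle and elliptic regularity applied to its Euler--Lagrange equation. Translating that equation back to $h_\tau=-2\log w_\tau$ gives $\tau(2\Delta h_\tau-|\nabla h_\tau|^2+\SS)+h_\tau-n-1=\lambda$; testing against $u_\tau$ and integrating identifies $\lambda+1=\mu_\alpha(g,\tau)$, yielding the stated identity.

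For (c), given $0\le t_1<t_2\le T$ with $\tau_i=T-t_i$, produce the minimizer $h_{\tau_2}$ from (b) at time $t_2$ and evolve $u_2=(4\pi\tau_2)^{-n/2}e^{-h_{\tau_2}}$ backward under $\Box^* u=0$ to time $t_1$, defining $h_1$ correspondingly. Then part (a) gives
\begin{equation*}
\mu_\alpha(g(t_1),\tau_1)\le\mathbb{W}_\alpha(g(t_1),\tau_1,h_1)\le\mathbb{W}_\alpha(g(t_2),\tau_2,h_{\tau_2})=\mu_\alpha(g(t_2),\tau_2).
\end{equation*}

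The main obstacle is (d), where I aim for matching asymptotic bounds as $\tau\to 0^+$. For the upper bound, fix $y\in M$, pass to $g(T-\tau)$-normal coordinates in a small ball around $y$, and use a cutoff Gaussian $h(x)\approx|x|^2/(4\tau)$; direct Euclidean computation shows $\mathbb{W}$ evaluated on the standard Gaussian equals exactly $0$, while the $O(|x|^2)$ metric correction and the uniformly bounded $\tau\SS$ contribute $O(\tau)$ after integration against the concentrating mass of scale $\sqrt{\tau}$. For the lower bound, use the scaling invariance $\mu_\alpha(g,\tau)=\mu_\alpha(\tau^{-1}g,1)$ recorded in the preceding remark: on the rescaled manifold $(M,\tau^{-1}g)$ the rescaled scalar-type term satisfies $\tilde\tau\tilde\SS=\tau\SS_g\to 0$ uniformly, and small $g$-balls become large in the rescaled metric on which the geometry converges to Euclidean in $C^\infty_{\mathrm{loc}}$; Gross's logarithmic Sobolev inequality on $\mathbb{R}^n$ gives nonnegativity of the Euclidean $\mathbb{W}$-functional, and the delicate step is to justify localization of the minimizer (showing that $e^{-h_\tau}$ concentrates so that far-field contributions are exponentially small) while passing the Euclidean log-Sobolev constant to the limit uniformly. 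Combining these bounds gives $\lim_{\tau\to 0^+}\mu_\alpha(g,\tau)=0$.
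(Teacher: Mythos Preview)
Your proposal is correct and follows essentially the same strategy as the paper: monotonicity of $\mathbb{W}_\alpha$ via the $\Box^\ast v$ computation (which is the content of the cited \cite{GPT} result), existence of a smooth minimizer via the substitution $w=e^{-h/2}$ and elliptic regularity (as in \cite{chowetc3}), monotonicity of $\mu_\alpha$ by evolving the minimizer backward under the conjugate heat equation, and the $\tau\to 0^+$ limit via a localized Gaussian test function for the upper bound together with a rescaling/blow-up to Euclidean log-Sobolev for the lower bound. The paper largely proceeds by citation and sketches only part (d), whereas you supply more self-contained arguments, but the underlying ideas coincide.
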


\begin{proof}
Part {\bf a.} follows from \cite[Theorem 5.2]{GPT}.

Part {\bf b.} is deducted from the regularity theory for elliptic equations based on Sobolev spaces. The details can be found in \cite[Proposition 17.24]{chowetc3}. Replacing $\RR$ by $\SS$, the argument works exactly the same.

Part {\bf c.} is an immediate consequence of  the monotonicity formula (part {\bf a.}) and the existence of a minimizer realizing the $\mu_{\alpha}$ functional (part {\bf b.}).

The proof of part {\bf d.} is mostly identical to that of \cite[Prop 3.2]{stw03notes} (also \cite[Prop 17.19, 17.20]{chowetc3}), but it is subtle so we give a brief argument here.

Assume that the flow exists for $\tau\in [0, \overline{\tau}]$. The idea is to construct cut-off functions reflecting the local geometry which looks like Euclidean. Then it is shown that the limit of $\mathbb{W}_{\alpha}$ functional on these functions is 0 if a certain parameter approaches 0. Thus, by the monotonicity of $\mu_{\alpha}$ and L. Gross\rq{}s logarithmic-Sobolev inequality on an Euclidean space \cite{gross93log}, the result then follows.

The construction of cut-off functions follows \cite[Prop 3.2]{stw03notes}. Let $\tau_0=\overline{\tau}-\epsilon$ for small $\epsilon$. Using normal coordinates at a point p on $(M, g(\tau_0))$,  we define a cut-off function
\[ f_1=\begin{cases}
\frac{|x|^2}{4\epsilon} & \textrm{ if $d(x,p)=|x|<\rho$},\\
\frac{\rho^2}{4\epsilon} & \textrm {elsewhere},
\end{cases}
\]
where $\rho$ is a positive number smaller than the injectivity radius (which exists since $M$ is closed).
Then by the choice of our coordinate,

$$d\mu (\tau_0) =1+ O (|x|^2), |x|<<1 ,$$ and let  
$$e^{-C}\doteqdot\int_M (4\pi\epsilon)^{-n/2} e^{f_1},$$ then  $C\rightarrow 0$  as $ \epsilon \rightarrow 0$.\\

Let $f=f_1+C$ then,
\begin{align*}
u &\doteqdot (4\pi\epsilon)^{-n/2}e^{-f};\\
1 &=\int_M (4\pi\epsilon)^{-n/2}e^{-f} d\mu (\tau_0);\\
|\nabla f|^2 &=|\nabla f_1|^2=|\nabla \frac{|x|^2}{4\epsilon}|^2=\frac{|x|^2}{4\epsilon}, \textrm{ for $|x|<\rho$}.
\end{align*}
We solve f backward using  equation $$\ppt f=-\SS-\Delta f +|\nabla {f}|^2 +\frac{n}{2\tau}.$$  The solution clearly depends on the choice of $\epsilon$. Now using (\ref{defineW}), we calculate,
\begin{align*}
\mathbb{W} (g(\tau_0), \overline{\tau}-\tau_0,f(\tau_0))=&\int_{|x|<\rho} \Big(\epsilon(\SS+\frac{|x|^2}{4\epsilon^2})+\frac{|x|^2}{4\epsilon^2}+C-n\Big)u d\mu\\
&+\int_{d(x,p)\geq \rho} (\epsilon \SS+\frac{\rho^2}{4\epsilon}+C-n) u d\mu\\
&=\int_{|x|<\rho}(\frac{|x|^2}{2\epsilon}-n) u d\mu+ \epsilon \int_M \SS u d\mu\\
& + C\int_{M} u d\mu+ \int_{d(x,p)\geq \rho}(\frac{\rho^2}{4\epsilon}-n) u d\mu\\
&= I+ II+III+ IV.
\end{align*}
By a change of variable, as $\epsilon\rightarrow 0$, we have,
\begin{align*}
II+III &\rightarrow 0;\\
IV&= \int_{d(x,p)\geq \rho}(\frac{\rho^2}{4\epsilon}-n)e^{-\frac{\rho^2}{4\epsilon}-C} \rightarrow 0;\\
I&= e^{-C}\int_{|y|\leq \frac{\rho}{\sqrt{\epsilon}}} (\frac{|y|^2}{2}-n) (2\pi)^{-n/2} e^{-|y|^2/4}(1+O(\epsilon |y|^2) dy \\
&\rightarrow \int_{\mathbb{R}^n} (\frac{|y|^2}{2}-n) (2\pi)^{-n/2} e^{-|y|^2/4} dy=0.
\end{align*}

Thus, by part {\bf a.} and {\bf b}, $\mu_\alpha(g(t), \overline{\tau}-t)\leq 0$ for any $t\leq \overline{\tau}$. The proof that the limit is actually 0 when $\tau\rightarrow 0^{+}$ follows from a rather standard blow-up argument whose details can be found in
either  \cite[Prop 3.2]{stw03notes} or \cite[Prop 17.20]{chowetc3}.
\end{proof}
\section{Estimates on the Heat Kernel}
In this section, we obtain several estimates on the heat kernel using maximum principle and the monotone framework. Particularly, we derive a gradient estimate and an upper bound for positive solutions of the conjugate heat equation. Then we prove our main result.

\subsection{A Gradient Estimate} We first establish a space-only gradient estimate. 
Recall that, \[\Box^{\ast}=\frac{\partial}{\partial \tau}-\Delta+\SS.\]
\begin{lemma}
\label{gradconjS}
Assume there exist $k_{1}, k_{2}, k_{3}, k_{4}>0$ such that the followings hold on $N\times[0,T]$,
\begin{align*}
\Rc (g(t)) &\geq -k_{1}g(t),\\
\alpha &\geq -k_{2} g(t),\\
|\nabla{\SS}|^2 &\leq k_{3},\\
|\SS| & \leq k_{4}.
\end{align*}Let q be any positive solution to the conjugate heat equation on $M\times [0,T]$, i.e., $\Box^{\ast}q=0$ , and $\tau=T-t$. If $q<Q$ for some constant $Q$ then there exist $C_{1},C_{2}$ depending on $k_{1},k_{2},k_{3}, k_{4}$ and n, such that for $0<\tau\leq \min\{1,T\}$, we have
\begin{equation}
\tau \frac{|\nabla{q}|^2}{q^2}\leq (1+C_{1}\tau)(\ln{\frac{Q}{q}}+C_{2}\tau).
\end{equation}

\end{lemma}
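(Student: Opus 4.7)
The plan is to adapt Hamilton's classical gradient estimate for the heat equation to the conjugate heat equation $\Box^{\ast}q = 0$ on our evolving background. Set $f = \ln q$ and $u = \ln(Q/q) = \ln Q - f \geq 0$; write $W = |\nabla f|^2 = |\nabla u|^2 = |\nabla q|^2/q^2$ and $L = \partial_\tau - \Delta$. The equation $\Box^{\ast}q=0$ becomes $\partial_\tau f = \Delta f + W - \SS$, from which one reads off $Lu = -W + \SS$. Differentiating $W$ and using $\partial_\tau g^{ij} = -2\alpha^{ij}$ together with Bochner's identity gives
\[
LW = -2\alpha(\nabla f,\nabla f) - 2\Rc(\nabla f,\nabla f) - 2|\nabla^2 f|^2 + 2\langle \nabla W,\nabla f\rangle - 2\langle\nabla\SS,\nabla f\rangle,
\]
and the hypotheses combined with Young's inequality then yield $LW \leq (2(k_1+k_2)+1)W + 2\langle \nabla W,\nabla f\rangle - 2|\nabla^2 f|^2 + k_3$.

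Following Hamilton, I introduce the test quantity $P := \tau W - (1+C_1\tau)(u + C_2\tau)$ with $C_1 = 2(k_1+k_2)+1$ and $C_2 > 0$ to be chosen. Combining $L[\tau W] = W + \tau LW$ with $Lu = -W + \SS$ and the estimate for $LW$ gives, after grouping terms, an upper bound of the form
\[
LP \leq \bigl(2 + (C_1 + 2K)\tau\bigr)W + 2\tau\langle\nabla W,\nabla f\rangle - 2\tau|\nabla^2 f|^2 - C_1 u - (1+C_1\tau)\SS - C_2(1+2C_1\tau) + k_3\tau,
\]
where $K = k_1+k_2+\tfrac{1}{2}$.

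Since $P|_{\tau=0} = -u \leq 0$, the parabolic maximum principle applied to $P - \varepsilon\tau$ reduces matters to checking $LP \leq 0$ at any spatial maximum of $P$. There $\nabla P = 0$ forces $\tau\nabla W = -(1+C_1\tau)\nabla f$, so $2\tau\langle \nabla W,\nabla f\rangle = -2(1+C_1\tau)W$. Plugging this back, the $W$-coefficient collapses to $(2K - C_1)\tau = 0$ by our choice of $C_1$. Discarding the nonpositive terms $-2\tau|\nabla^2 f|^2$ and $-C_1 u$ (using $u\geq 0$) and using $|\SS|\leq k_4$, one is left with $(k_4 - C_2) + \tau(k_3 + C_1 k_4 - 2C_1 C_2)$, which is $\leq 0$ for $C_2$ sufficiently large, e.g.\ $C_2 = k_4 + k_3/(2C_1)$. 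Sending $\varepsilon\to 0$ then yields $P \leq 0$, which is the claimed inequality. The key obstacle is the cross term $2\langle\nabla W,\nabla f\rangle$ in the evolution of $W$: the coefficient $1 + C_1\tau$ in front of $u$ is forced precisely to cancel it at the maximum, while the conjugate-heat-specific term $\SS$ appearing in $Lu$ is what necessitates introducing the additive $C_2\tau$.
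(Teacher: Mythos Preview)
Your argument is correct. The computations of $Lu=-W+\SS$ and of $LW$ via Bochner are right, the first-time-maximum argument on a closed manifold is legitimate, and with $C_{1}=2(k_{1}+k_{2})+1$ the $W$--coefficient at the maximum indeed vanishes, leaving a quantity that is nonpositive once $C_{2}\geq k_{4}+k_{3}/(2C_{1})$.

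The paper, however, proceeds differently. Instead of the unweighted quantities $W=|\nabla\ln q|^{2}$ and $u=\ln(Q/q)$, it works with the $q$--weighted versions $|\nabla q|^{2}/q=qW$ and $q\ln(Q/q)=qu$. The point of this weighting is that the troublesome drift term $2\langle\nabla W,\nabla f\rangle$ in your evolution of $W$ gets absorbed into a perfect square: one finds
\[
(-\partial_{t}-\Delta)\frac{|\nabla q|^{2}}{q}=-\frac{2}{q}\Bigl|\nabla^{2}q-\frac{dq\otimes dq}{q}\Bigr|^{2}+\text{(lower order)},
\]
so the differential inequality holds pointwise without any $\nabla P=0$ substitution. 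The paper then forms $\Phi=a(\tau)\frac{|\nabla q|^{2}}{q}-b(\tau)\,q\ln\frac{Q}{q}-c(\tau)\,q$ with general time-dependent coefficients (in particular $b(\tau)=e^{k_{4}\tau}$), checks $(-\partial_{t}-\Delta)\Phi\leq 0$ everywhere, and applies the maximum principle directly. Your route is closer to the classical Li--Yau/Hamilton template and is arguably more elementary, trading the $q$--weighting trick for the gradient identity at the maximum; the paper's route gives a cleaner global differential inequality and more flexibility in the choice of coefficients. Both lead to the same estimate up to the values of $C_{1},C_{2}$.
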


\begin{proof}
We compute that
\begin{align*}
(-\ppt-\triangle)\frac{|\nabla{q}|^2}{q}=&\SS\frac{|\nabla{q}|^2}{q}+\frac{1}{q}(-\ppt-\triangle)|\nabla{q}|^2+2|\nabla{q}|^2\nabla{\frac{1}{q}}\nabla{\ln{q}}\\&-2\nabla|\nabla{q}|^2\nabla{\frac{1}{q}},\\
\frac{1}{q}(-\ppt-\triangle)|\nabla{q}|^2=&\frac{1}{q}\Big[-2(\alpha+\Rc)(\nabla{q},\nabla{q})-2\nabla{q}\nabla{(\SS q)}-2|\nabla^2{q}|^2 \Big],\\
2|\nabla{q}|^2\nabla{\frac{1}{q}}\nabla{\ln{q}}=&-2\frac{|\nabla{q}|^4}{q^3},\\
-2\nabla|\nabla{q}|^2\nabla{\frac{1}{q}}=&4\frac{\nabla^2{q}(\nabla{q},\nabla{q})}{q^2}.
\end{align*}
Thus
\begin{align*}
(-\ppt-\triangle)\frac{|\nabla{q}|^2}{q}=&\frac{-2}{q}|\nabla^2{q}-\frac{dq\otimes dq}{q}|^2+\SS\frac{|\nabla{q}|^2}{q}\\
&+\frac{-2(\alpha+\Rc)(\nabla{q},\nabla{q})-2\SS\nabla{q}\nabla{q}-2q\nabla{q}\nabla{\SS}}{q}\\
\leq &[2(k_{1}+k_{2})+nk_2]\frac{|\nabla{q}|^2}{q}+2|\nabla{q}||\nabla{\SS}|\\
\leq & [2k_{1}+(2+n)k_2+1]\frac{|\nabla{q}|^2}{q}+k_{3}q.
\end{align*}
Furthermore, we have
\begin{align*}
(-\ppt-\triangle)(q\ln{\frac{Q}{q}})&=-\SS q\ln{\frac{Q}{q}}+\SS q+\frac{|\nabla{q}|^2}{q}\\
& \geq \frac{|\nabla{q}|^2}{q}-nk_{2}q-k_{4}q\ln{\frac{Q}{q}}.
\end{align*}
Let $\Phi=a(\tau)\frac{|\nabla{q}|^2}{q}-b(\tau)q\ln{\frac{Q}{q}}-c(\tau)q,$ then
\begin{align*}
(-\ppt-\triangle)\Phi \leq &\frac{|\nabla{q}|^2}{q}\Big(a\rq{}(\tau)+a(\tau)(2k_{1}+(2+n)k_2+1)-b(\tau)\Big)\\
&+q\ln{\frac{Q}{q}}\Big(k_{4}b(\tau)-b\rq{}(\tau)\Big)\\
&+q\Big(k_{3} a(\tau)-c\rq{}(\tau)+nk_{2}b(\tau)+c(\tau)k_{4}\Big).
\end{align*}
We can now choose a, b and c appropriately such that $(-\partial_{t}-\triangle)\Phi\leq 0$. For example, take
\begin{align*}
a&=\frac{\tau}{1+(2k_{1}+(2+n)k_2+1)\tau},\\
b&=e^{k_{4}\tau},\\
c&=(e^{k_{5}k_{4}\tau}nk_{2}+k_{3})\tau,
\end{align*} for $k_{5}=1+\frac{k_3}{nk_2}$.
Then by maximum principle, noticing that $\Phi\leq 0$ at $\tau=0$, we arrive at
$$a\frac{|\nabla{q}|^2}{q}\leq b(\tau)q\ln{\frac{Q}{q}}+cq.$$
The result then follows from simple algebra.\\
\end{proof}

\subsection{$L_{\infty}$ Bound} Second, we shall derive an upper bound for positive conjugate heat solutions. Our main statement says that any normalized solution can not blow up too fast.
\begin{lemma}
\label{infconjS}
Let q be any normalized positive solution to the conjugate heat equation on $M\times [0,T]$, i.e., $\Box^{\ast}q=0$   with $\int q d\mu_{g(0)}=1$. Let $\tau=T-t$, then there exists a constant C depending on the geometry of ${g(t)}_{t\in[0,T]}$, such that
\begin{equation}
q(y,\tau)\leq \frac{C}{\tau^{n/2}}.
\end{equation}
\end{lemma}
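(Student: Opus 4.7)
The plan is to combine the gradient estimate of Lemma \ref{gradconjS} with the mass normalization $\int_M q\,d\mu = 1$ and Bishop--Gromov volume comparison. Heuristically, the gradient estimate forces $q(\cdot, \tau_0)$ to remain comparable to its maximum $Q_0$ throughout a geodesic ball of radius $\sqrt{\tau_0}$ about any maximizer; since the total mass in such a ball is at most $1$, one must have $Q_0 \lesssim \tau_0^{-n/2}$. For $\tau_0$ bounded away from $0$ the result is immediate from smoothness of $q$ on the compact slab $M \times [0, T-\tau_0]$, so I will only need to treat small $\tau_0 \in (0, \min\{1, T\}]$.

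First I would fix $\tau_0 \in (0,\min\{1,T\}]$, set $Q_0 := \max_M q(\cdot, \tau_0)$ (attained at some $y_0$ by compactness of $M$), and apply Lemma \ref{gradconjS} with any $Q > Q_0$ to obtain
\[
|\nabla \log q|^2 \leq \tfrac{1}{\tau_0}(1 + C_1\tau_0)\bigl(\log(Q/q) + C_2\tau_0\bigr)
\]
on $M \times \{\tau_0\}$. Next I would propagate this to a pointwise lower bound on $q$ near $y_0$: along any unit-speed minimizing $g(t)$-geodesic $\gamma$ emanating from $y_0$, the function $\psi(s) := \log(Q/q(\gamma(s), \tau_0))$ satisfies
\[
\psi'(s) \leq \sqrt{\tfrac{(1+C_1\tau_0)(\psi(s)+C_2\tau_0)}{\tau_0}},
\]
which integrates routinely via $\tfrac{d}{ds}\sqrt{\psi+C_2\tau_0}$. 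Sending $Q \downarrow Q_0$ drives $\psi(0) = \log(Q/Q_0) \to 0$, and evaluating at $s = \sqrt{\tau_0}$ then gives a bound $\psi(\sqrt{\tau_0}) \leq L$ with $L$ depending only on $C_1, C_2$; equivalently $q(y, \tau_0) \geq e^{-L} Q_0$ throughout $B_{g(t)}(y_0, \sqrt{\tau_0})$.

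To finish, I would invoke Bishop--Gromov, using the Ricci lower bound implicit in ``the geometry of $\{g(t)\}$'', to get $|B_{g(t)}(y_0, \sqrt{\tau_0})| \geq c_0 \tau_0^{n/2}$ for $\sqrt{\tau_0}$ below a fixed scale. Inserting the pointwise lower bound into mass conservation yields
\[
1 = \int_M q\, d\mu_{g(t)} \geq e^{-L} c_0 Q_0 \tau_0^{n/2},
\]
and rearranging gives $Q_0 \leq C\tau_0^{-n/2}$. The main technical nuisance will be verifying that all the geometric constants---the $C_i$ from Lemma \ref{gradconjS}, the Ricci lower bound, and the injectivity radius governing the volume comparison---are uniform in $t \in [0,T]$; this is automatic from smoothness of $g(t)$ and compactness of $[0,T]$.
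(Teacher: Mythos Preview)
Your overall strategy---gradient estimate, then integrate along a geodesic to get a lower bound on $q$ in a ball, then use mass normalization and a volume lower bound---is exactly the paper's. But there is a genuine gap in the step where you invoke Lemma~\ref{gradconjS} with $Q>Q_0$ and then send $Q\downarrow Q_0$.

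The hypothesis ``$q<Q$'' in Lemma~\ref{gradconjS} is not a condition at the single time $\tau_0$; the proof is a parabolic maximum-principle argument that needs $q<Q$ on the entire time slab over which the estimate is run (the barrier involves $q\log(Q/q)$, and $\Phi\le 0$ must hold at the initial time of the slab). You have only arranged $q(\cdot,\tau_0)\le Q_0$; nothing prevents $\sup_M q(\cdot,\tau)$ from exceeding $Q_0$ for $\tau<\tau_0$. In fact, for the heat kernel---the case the lemma is actually applied to---one has $\sup_M q(\cdot,\tau)\to\infty$ as $\tau\to 0$, so no finite $Q$ satisfies the hypothesis on $[0,\tau_0]$, and your limit $Q\downarrow Q_0$ cannot even be started. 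If you instead take $Q$ to be a global space-time sup, your integration gives $\psi(\sqrt{\tau_0})\le \psi(0)+O(\sqrt{\psi(0)})+O(1)$ with $\psi(0)=\log(Q/Q_0)$ uncontrolled, and the resulting lower bound $q\ge Q_0 e^{-O(\sqrt{\psi(0)})}$ is useless.

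The paper resolves this with a small but essential bootstrap: rather than fixing $\tau_0$ arbitrarily, it chooses $(y_0,\tau_0)$ to maximize $\tau^{n/2}q(y,\tau)$ over $M\times[0,\min\{1,T\}]$. This choice automatically furnishes the a~priori bound
\[
q(y,\tau)\le \Bigl(\tfrac{\tau_0}{\tau}\Bigr)^{n/2}q(y_0,\tau_0)\le 2^{n/2}q(y_0,\tau_0)=:Q
\quad\text{on } M\times[\tau_0/2,\tau_0],
\]
so Lemma~\ref{gradconjS} can legitimately be applied on the half-slab $[\tau_0/2,\tau_0]$ (with $\tau_0/2$ playing the role of elapsed time). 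Now $\log(Q/q(y_0,\tau_0))=\tfrac{n}{2}\log 2$ is an explicit constant, your geodesic integration goes through verbatim, and the volume/mass argument yields $\tau_0^{n/2}q(y_0,\tau_0)\le C$. Since $(y_0,\tau_0)$ was the space-time maximizer of $\tau^{n/2}q$, this single bound gives the conclusion for all $(y,\tau)$.
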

\begin{proof}
The proof is modeled after \cite[Lemma 2.2]{ni06}  (also see \cite[Lemma 16.47]{chowetc2}).
As the solution and the flow is well defined in $M\times [0,T]$, there exists $y_{0}$, $\tau_{0}$ such that
\begin{equation}
\sup_{M\times [0,\min\{1,T\}]}\tau^{n/2}q(y,\tau)=\tau_{0}^{n/2}q(y_{0},\tau_{0}).
\end{equation}
In particular,
\begin{equation*}
\sup_{M\times [\tau_{0}/2,\tau_{0}]}q(y,\tau)\leq \frac{\tau_{0}^{n/2}}{\tau^{n/2}}q(y_{0},\tau_{0})\leq 2^{n/2}q(y_{0},\tau_{0}):=Q.
\end{equation*}
Applying Lemma \ref{gradconjS} to $q(y,\tau)$ on $M\times [\tau_0/2, \tau_0]$ we obtain,
\begin{equation*}
\frac{\tau_0}{2}\frac{|\nabla q|^2}{q^2}(y,\tau_{0}) \leq (1+C_{1}\frac{\tau_0}{2})(\log(\frac{Q}{q(y,\tau_0)})+C_{2}\frac{\tau_0}{2}).
\end{equation*}
Let $G(y,\tau_0):=\log(\frac{Q}{q(y,\tau_0)})+C_{2}\frac{\tau_0}{2}$, then the inequality above can be rewritten as
\begin{equation*}
|\nabla \sqrt{G}|^2=|\frac{1}{2}\frac{\nabla G}{G^{1/2}}|^2=|\frac{1}{2G^{1/2}}\frac{\nabla{q}}{q}|^2=\frac{1}{4G}\frac{|\nabla q|^2}{q^2}\leq \frac{1+ C_{1}\frac{\tau_0}{2}}{2\tau_0}.
\end{equation*}
Therefore, with $B_{\tau}(y,r)$ denoting the ball of radius r measured by $g(\tau)$ around the point y, we have
\begin{equation*}
\sup_{B_{\tau_0}(y_0, \sqrt{\frac{\tau_0}{1+ C_{1}\frac{\tau_0}{2}}})}\sqrt{G}(y,\tau_0)\leq \sqrt{G}(y_0,\tau_0)+\frac{1}{\sqrt{2}}=\sqrt{\frac{n}{2}\log{2}+C_{2}\frac{\tau_0}{2}}+\frac{1}{\sqrt{2}}.
\end{equation*}
Writing the above inequality in terms of $q(y,\tau_0)$ yields,
\begin{equation*}
q(y,\tau_0)\geq q(y_0,\tau_0)\exp \big \{-\frac{1}{2}-\frac{n}{2}\log{2}-\frac{2}{\sqrt{2}}\sqrt{\frac{n}{2}\log{2}+\frac{C_2}{2}}\big \}:=C_3 q(y_0,\tau_0).
\end{equation*}
Now we observe that there exists a constant $C_{4}$ depending on the geometry of $(M,g(\tau_0))$, such that
\begin{equation*}
\text{Vol}_{g(\tau_0)}\Big(B_{\tau_0}(y_0, \sqrt{\frac{\tau_0}{1+ C_{1}\frac{\tau_0}{2}}}\Big)\geq C_{4}\tau_0^{n/2}.
\end{equation*}
Therefore we have,
\begin{equation*}
q(y_0,\tau_0)\leq \frac{1}{C_3C_4\tau_0^{n/2}}\int_{M}q(y,\tau_0)d\mu_{\tau_0}(y):=\frac{C_5}{\tau_0^{n/2}}\int_{M}q(y,\tau_0)d\mu_{\tau_0}(y).
\end{equation*}
By our choice of $y_0, \tau_0$ and the fact that $\int_{M}q(y,\tau)d\mu_{\tau}(y)$ remains constant along the flow, the statement follows.
\end{proof}

\begin{remark}
It is interesting to note that in \cite{z12bounds}, a Harnack inequality is used to obtain an off-diagonal bound,  while here the argument goes the opposite direction.\end{remark}
\subsection{Proofs of Main Results} Finally, we are ready to finish our proof of the main theorem.

\begin{lemma}
\label{integralboundabove} Let $H(x,t;y,T)=(4\pi\tau)^{-n/2}e^{-h}$ be a heat kernel and $\Phi$ be any positive solution to the heat equation. Then we have
\[\int_{M}hH\Phi d\mu\leq \frac{n}{2}\Phi(y,T), ~\text{i.e, }~\int_{M}(h-\frac{n}{2})H\Phi d\mu \leq 0.\]
\end{lemma}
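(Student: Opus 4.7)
My plan is to integrate the pointwise Harnack estimate of Theorem~\ref{mainth} against the positive weight $H\Phi$ and close the bound via a Nash-entropy-style monotonicity argument, generalizing the classical $\Phi\equiv 1$ case.

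First, I will rewrite Harnack in a cleaner form. Applying $\Box^{\ast}H = 0$ to $h = -\log H - \frac{n}{2}\log(4\pi\tau)$ gives $h_t = \frac{n}{2\tau} - \Delta h + |\nabla h|^2 - \SS$, and therefore
\[
2\Delta h - |\nabla h|^2 + \SS \;=\; \frac{n}{2\tau} - (h_t - \Delta h).
\]
Substituting into Theorem~\ref{mainth} yields the equivalent pointwise inequality $h - \frac{n}{2} \leq \tau(h_t - \Delta h)$. Multiplying by the positive weight $H\Phi$ and integrating over $M$, then invoking the reproducing identity $\int_M H\Phi\,d\mu = \Phi(y,T)$ (valid for every $t\in[0,T)$ since $\Box^{\ast}H = \Box\Phi = 0$ forces $\ddt\int_M H\Phi\,d\mu = 0$, with the $t\to T^{-}$ limit identified through Theorem~\ref{asymptoticconj}), gives
\[
\int_M h H\Phi\,d\mu \;-\; \tfrac{n}{2}\Phi(y,T) \;\leq\; \tau\int_M (h_t - \Delta h)\, H\Phi\,d\mu.
\]

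Second, I will show the right-hand side is non-positive via time-monotonicity. Set $P(t) := \int_M hH\Phi\,d\mu - \frac{n}{2}\Phi(y,T)$. Using $H_t = -\Delta H + \SS H$, $\Phi_t = \Delta\Phi$, and integration by parts one computes
\[
\ddt P(t) \;=\; \tfrac{n}{2\tau}\Phi(y,T) - \int_M (|\nabla h|^2 + \SS)\,H\Phi\,d\mu.
\]
Combined with the integrated Harnack estimate $\int_M v\Phi\,d\mu \leq 0$, this is designed to produce a differential inequality of the form $\ddt(\tau P) \geq 0$. The Gaussian behavior of $H$ near $y$ (Theorem~\ref{asymptoticconj}) gives $\int_M hH\Phi\,d\mu \to \frac{n}{2}\Phi(y,T)$, so $\tau P(t) \to 0$ as $t \to T^{-}$. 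Monotonicity of $\tau P$ would then force $\tau P \leq 0$, hence $P \leq 0$ on $[0,T)$, which is the claim.

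The hard part will be the monotonicity for general $\Phi$. When $\Phi \equiv 1$ the identity $\Delta\Phi = 0$ kills the auxiliary term produced by integration by parts on $\Delta h$, and $\ddt(\tau P)\geq 0$ falls out cleanly from the $\Phi=1$ Harnack, recovering the Nash-entropy bound $\int_M hH\,d\mu \leq \frac{n}{2}$. For an arbitrary positive heat solution $\Phi$ the same IBP generates an extra contribution of the form $-2\tau\int_M H\,\Delta\Phi\,d\mu$ that does not carry a definite sign, and it is this term that threatens the clean monotonicity of $\tau P$. Handling it is the technical crux: I would look to absorb it into a more refined monotone quantity adapted to $\Phi$, either by a time-integrated estimate or by exploiting the $\D_\alpha \geq 0$ hypothesis together with Lemmas~\ref{gradconjS} and~\ref{infconjS} to control the interaction of $H$ with $\Delta\Phi$.
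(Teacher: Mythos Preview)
Your proposal has a fatal circularity: you invoke Theorem~\ref{mainth} (the pointwise Harnack estimate) in the very first step, but in the paper's logical structure Lemma~\ref{integralboundabove} is a \emph{prerequisite} for Theorem~\ref{mainth}. Indeed, Lemma~\ref{integralboundabove} is used in the proof of Proposition~\ref{limit0}, and Proposition~\ref{limit0} is what makes the proof of Theorem~\ref{mainth} go through. So you cannot assume the Harnack inequality here; doing so collapses the whole argument of the paper into a petitio principii. Even setting the circularity aside, your proposal is incomplete on its own terms: you concede that for general $\Phi$ the extra term $-2\tau\int_M H\,\Delta\Phi\,d\mu$ has no sign and that you do not yet know how to absorb it, so the claimed monotonicity of $\tau P$ is not established.

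The paper's argument avoids all of this by never touching the Harnack quantity. It uses instead the comparison $h\le \ell$ between the heat-kernel exponent and M\"uller's reduced distance (Lemma~\ref{compareRDandconjheat}(c)), together with the short-time bound $\ell(x,\tau)\le e^{2k_2\tau}d_T^2(x,y)/(4\tau)+O(\tau)$ from Lemma~\ref{compareRDandconjheat}(a). This reduces the problem to computing $\lim_{\tau\to 0}\int_M \frac{d_T^2(x,y)}{4\tau}H\Phi\,d\mu$, which by the heat-kernel asymptotics of Theorem~\ref{asymptoticconj} becomes an explicit Euclidean Gaussian integral equal to $\frac{n}{2}\Phi(y,T)$. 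Note also that what the paper actually proves (and actually uses downstream) is the $\limsup_{\tau\to 0}$ version of the inequality; your attempt to establish it for every $t$ via monotonicity is aiming at a stronger statement than is needed, and the tool you reached for is unavailable at this stage.
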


\begin{proof}
By Lemma \ref{compareRDandconjheat} we have
\begin{align*}
\limsup_{\tau\rightarrow 0}\int_M hH\Phi d\mu \leq \limsup_{\tau\rightarrow 0}\int_M \ell (x,\tau)H\Phi d\mu (x)\\
\leq \limsup_{\tau\rightarrow 0}\int_{M} \frac{d_{T}^2(x,y)}{4\tau}H\Phi d\mu(x).
\end{align*}
Using Theorem \ref{asymptoticconj}, it follows that, 
\begin{equation*}
\lim_{\tau\rightarrow 0}\int_{M} \frac{d_{T}^2(x,y)}{4\tau}H\Phi d\mu (x)=\lim_{\tau\rightarrow 0}\int_{M} \frac{d_{T}^2(x,y)}{4\tau}\frac{e^{-\frac{d_{T}^2(x,y)}{4\tau}}}{(4\pi\tau)^{n/2}}\Phi d\mu(x).
\end{equation*}
Either by differentiating twice under the integral sign or using these following identities on Euclidean spaces
\[\int_{-\infty}^{\infty}e^{-a\textbf{x}^2}d\textbf{x} =\sqrt{\frac{\pi}{a}} \text{ and } \int_{-\infty}^{\infty}\textbf{x}^{2}e^{-a\textbf{x}^2}d\textbf{x} =\frac{1}{2a}\sqrt{\frac{\pi}{a}},
\]
we obtain that 
\begin{equation*}
\int_{\RR^{n}}|x|^{2}e^{-a|x|^2}dx=n(\int_{-\infty}^{\infty}\textbf{x}^{2}e^{-a\textbf{x}^2}d\textbf{x})\Big(\int_{-\infty}^{\infty}e^{-a\textbf{x}^2}d\textbf{x}\Big)^{n-1}=\frac{n}{2a}(\frac{\pi}{a})^{n/2}.
\end{equation*}
Therefore,
\begin{equation*}
\lim_{\tau\rightarrow 0}\frac{d_{T}^2(x,y)}{4\tau}\frac{e^{-\frac{d_{T}^2(x,y)}{4\tau}}}{(4\pi\tau)^{n/2}}=\frac{n}{2}\delta_{y}(x),
\end{equation*}
hence
\begin{equation*}
\lim_{\tau\rightarrow 0}\int  \frac{d_{T}^2(x,y)}{4\tau}\frac{e^{-\frac{d_{T}^2(x,y)}{4\tau}}}{(4\pi\tau)^{n/2}}\Phi d\mu_{N}(x)=\frac{n}{2}\Phi(y,T).
\end{equation*}
Thus the result follows.
\end{proof}
 The following result implies that  the equality actually holds.

\begin{proposition}
\label{limit0}
Let $H(x,t;y,T)=(4\pi\tau)^{-n/2}e^{-h}$ be a heat kernel and $\Phi$ be any positive solution to the heat equation. Then for
\begin{align*}
v&=\Big[(T-t)(2\triangle{h}-|\nabla{h}|^2+S)+h-n\Big]H,\\
\rho_{\Phi}(t)&=\int_M v\Phi d\mu,
\end{align*}
 we have
\[\lim_{t\rightarrow T}\rho_{\Phi}(t)=0.\]
\end{proposition}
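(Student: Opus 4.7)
My plan is to rewrite $\rho_\Phi(t)$ so that each summand can be evaluated in the limit $\tau\to 0$ using the concentration of $H(\cdot,t;y,T)$ to $\delta_y$ and the heat-kernel asymptotic expansion of Theorem~\ref{asymptoticconj}. Using $\nabla H=-H\nabla h$, which gives $H\Delta h=-\Delta H+H|\nabla h|^{2}$, and integrating by parts on the closed manifold $M$ to move the Laplacian from $H$ onto $\Phi$, I obtain
\[
\rho_\Phi(t)=-2\tau\int_{M}H\Delta\Phi\,d\mu+\tau\int_{M}SH\Phi\,d\mu+\tau\int_{M}H|\nabla h|^{2}\Phi\,d\mu+\int_{M}(h-n)H\Phi\,d\mu.
\]
A short computation using $\partial_{t}d\mu=-S\,d\mu$, $\Box^{\ast}H=0$, and $\partial_{t}\Phi=\Delta\Phi$ shows that $\int_{M}H\Phi\,d\mu$ is conserved in $t$ and hence equals its $\tau\to0$ value $\Phi(y,T)$; in particular $\int_{M}H\Delta\Phi\,d\mu\to\Delta\Phi(y,T)$ and $\int_{M}SH\Phi\,d\mu\to S(y,T)\Phi(y,T)$ are bounded, so the first two terms above vanish once multiplied by $\tau$.

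The heart of the argument is then to verify
\[
\lim_{\tau\to 0}\int_{M}hH\Phi\,d\mu=\tfrac{n}{2}\Phi(y,T)\quad\text{and}\quad\lim_{\tau\to 0}\tau\int_{M}H|\nabla h|^{2}\Phi\,d\mu=\tfrac{n}{2}\Phi(y,T),
\]
after which the statement follows immediately from $n\int_{M}H\Phi\,d\mu\to n\Phi(y,T)$. For the first, Theorem~\ref{asymptoticconj} gives $h(x,t)=d_{T}^{2}(x,y)/(4\tau)-\log u_{0}(x,y,\tau)+O(\tau)$ near $(y,T)$ with $u_{0}$ smooth and $u_{0}(y,y,0)=1$, so that $h$ differs from $d_{T}^{2}/(4\tau)$ by a continuous term vanishing on the diagonal; combined with the Gaussian moment identity $\lim\int (d_{T}^{2}/(4\tau))H\Phi\,d\mu=(n/2)\Phi(y,T)$ that already appears in the proof of Lemma~\ref{integralboundabove}, this upgrades that lemma to an equality for the heat-kernel $h$. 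For the gradient term, formally differentiating the same expansion gives $\tau|\nabla h|^{2}=d_{T}^{2}(x,y)/(4\tau)+O(d_{T}(x,y)+\tau)$, and the same Gaussian moment delivers the same limit. Summing, $\lim_{t\to T}\rho_\Phi(t)=(n/2)\Phi(y,T)+(n/2)\Phi(y,T)-n\Phi(y,T)=0$.

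The hard part will be making the gradient asymptotic rigorous, since Theorem~\ref{asymptoticconj} is stated for $H$ itself and controlling $|\nabla h|^{2}$ pointwise requires derivative estimates on the amplitude coefficients $u_{j}$. A robust backup is to apply the gradient estimate of Lemma~\ref{gradconjS} to $H$ with the supremum $Q$ furnished by the $L^\infty$-bound of Lemma~\ref{infconjS}, which after cancellation of the $\log\tau$ terms yields $\tau|\nabla h|^{2}\le h+C$ for a geometric constant $C$ and hence the correct-order upper bound. The matching lower bound can then be obtained from the monotonicity $\Box^{\ast}v\le 0$ granted by Theorem~\ref{conjHar} under $\D_\alpha\ge 0$, together with the entropy inequality $\int_{M}v\,d\mu=\mathbb{W}_\alpha(g,\tau,h)\ge\mu_\alpha(g,\tau)$ and the vanishing $\lim_{\tau\to 0^{+}}\mu_\alpha(g,\tau)=0$ from Lemma~\ref{basicmu}(d).
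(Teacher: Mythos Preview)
Your integration-by-parts decomposition of $\rho_\Phi(t)$ and the handling of the terms carrying an explicit factor of $\tau$ match the paper exactly. The divergence is in how you deal with the two remaining pieces $\int_M hH\Phi\,d\mu$ and $\tau\int_M |\nabla h|^2 H\Phi\,d\mu$.

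Your primary plan is to show both limits equal $\tfrac{n}{2}\Phi(y,T)$ directly from the parametrix expansion. For the $h$-integral this is plausible, though note it is strictly sharper than Lemma~\ref{integralboundabove}: the paper only proves, and only needs, the inequality $\le \tfrac{n}{2}\Phi(y,T)$. For the gradient integral you yourself flag that differentiating the expansion is not justified by Theorem~\ref{asymptoticconj} as stated, so this route remains incomplete.

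The backup strategy has a genuine gap. Feeding the $L^\infty$ bound $H\le C\tau^{-n/2}$ of Lemma~\ref{infconjS} into Lemma~\ref{gradconjS} gives, after your $\log\tau$ cancellation, only
\[
\tau|\nabla h|^2 \le (1+C_1\tau)\Big(h+\log C + \tfrac{n}{2}\log(4\pi) + C_2\tau\Big),
\]
so upon integration $\limsup \tau\int|\nabla h|^2 H\Phi\,d\mu \le \big(\tfrac{n}{2}+C'\big)\Phi(y,T)$ with a strictly positive geometric constant $C'$, hence only $\limsup_{t\to T}\rho_\Phi(t)\le C'\Phi(y,T)$. The entropy argument (once $\rho_\Phi$ is rewritten in terms of $\tilde h = h-\log\Phi$, not $h$ itself as you wrote) indeed gives $\liminf_{t\to T}\rho_\Phi(t)\ge 0$. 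But these two bounds do not squeeze to $0$; you are off by $C'$, and nothing in your outline removes it.

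The paper closes exactly this gap with an additional idea. After obtaining the same finite upper bound and the monotonicity of $\rho_\Phi$ (so the limit $\beta$ exists), it observes that $\rho_\Phi(T-\tau)-\rho_\Phi(T-\tfrac{\tau}{2})\to 0$ and applies the mean value theorem to $\partial_t\rho_\Phi=-\int_M\Phi\,\Box^\ast v\,d\mu$ (Theorem~\ref{conjHar}) to extract a sequence $\tau_i\to 0$ along which
\[
\tau_i^{2}\int_M \Big|\alpha+\nabla\nabla h - \tfrac{g}{2\tau_i}\Big|^{2} H\Phi\,d\mu \longrightarrow 0.
\]
Cauchy--Schwarz on the trace then forces $\tau_i\int_M\big(\SS+\Delta h - \tfrac{n}{2\tau_i}\big)H\Phi\,d\mu\to 0$, which allows $\rho_\Phi(T-\tau_i)$ to be rewritten so that the only surviving contribution is $\int_M(h-\tfrac{n}{2})H\Phi\,d\mu\le 0$ by Lemma~\ref{integralboundabove}. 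This yields $\beta\le 0$, and combined with the entropy lower bound (the same one you propose) gives $\beta=0$. This mean-value-theorem extraction is the ingredient your outline is missing.
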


\begin{proof}

Integrating by parts yields that 
\begin{align*}
\rho_{\Phi}(t) &=\int_{M}\Big[\tau(2\triangle{h}-|\nabla h|^2+S)+h-n\Big]H\Phi d\mu\\
&=-\int_{M}2\tau\nabla{h}\nabla({H\Phi})d\mu-\int_{M}\tau|\nabla h|^2 H\Phi d\mu+\int_{M}(\tau \SS+h-n)H\Phi d\mu\\
&= \int_{M}\tau|\nabla h|^2 H\Phi d\mu-2\tau\int_{M} \nabla{\Phi}\nabla{h}H d\mu+\int_{M}(\tau \SS+h-n)H\Phi d\mu\\
&= \int_{M}\tau|\nabla h|^2 H\Phi d\mu-2\tau\int_{M}H\triangle{\Phi}d\mu+\int_{M}(\tau \SS+h-n)H\Phi d\mu\\
&= \int_{M}\tau|\nabla h|^2 H\Phi d\mu+\int_{M}hH\Phi d\mu-2\tau\int_{M}H\triangle{\Phi}d\mu+\int_{M}(\tau\SS-n)H\Phi d\mu.
\end{align*}

Notice that, except the first two terms, the rest approaches $-n\Phi(y,T)$ as $\tau\rightarrow 0$.
For the first term, using Lemmas \ref{infconjS} and \ref{gradconjS} for any space-($\tau$) time point on $M\times[\frac{\tau}{2},\tau]$ we arrive at
\begin{align*}
\tau\int_{M}|\nabla h|^2H\Phi d\mu&\leq (2+C_{1}\tau)\int_{M}(\ln{(\frac{C_{3}}{H\tau^{n/2}})}+C_{2}\tau)H\Phi d\mu\\
& \leq (2+C_{1}\tau)\int_{M}(\ln{C_{3}}+h+\frac{n}{2}\ln(4\pi)+C_{2}\tau)H\Phi d\mu,
\end{align*}
with $C_{1},C_{2}$ defined as in Lemma \ref{gradconjS}, while $C_{3}$ is a constant depending on the geometry of $g(t)$, $\frac{\tau}{2}\leq T-t\leq\tau$. As $\tau\rightarrow 0$, $\ln C_{3}+\frac{n}{2}\ln(4\pi) $ is bounded from above by another constant $C_{4}$ also depending on the geometry of $g(t)$, $t\in[0,T]$.
Consequently, by Lemma \ref{integralboundabove}, which claims the finiteness of $\int_M hH\Phi d\mu$,
\begin{align*}
\lim_{\tau\rightarrow 0}(\int_{M}\tau|\nabla h|^2d\mu+\int_{M}hH\Phi d\mu)&\leq 3\int_{M}hH\Phi d\mu+2\ln{C_{4}}\Phi(x,T)\\
&\leq (\frac{3n}{2}+2\ln{C_{4}})\Phi(x,T).
\end{align*}
Thus we have 
\begin{equation*}
\lim_{t\rightarrow T}\rho_{\Phi}(t)\leq C_{5}\Phi(x,T).
\end{equation*}

Since $\Phi$ is a positive solution satisfying $\partial_{t}\Phi=\triangle{\Phi}$, applying Theorem \ref{conjHar} yields that,
\begin{equation}
\label{evolintegral}
\partial_{t}\rho_{\Phi}(t)=\partial_{t}\int v\Phi d\mu=\int (\Box{\Phi}v-\Phi\Box^{\ast}v)d\mu \geq 0.
\end{equation}
The above implies that there exists $\beta$, such that
\begin{equation*}
\lim_{t\rightarrow T}\rho_{\Phi}(t)=\beta.
\end{equation*}
Hence \[\lim_{\tau\rightarrow 0}(\rho_{\Phi}(T-\tau)-\rho_{\Phi}(T-\frac{\tau}{2}))=0.\]
By the above equation (\ref{evolintegral}), Theorem \ref{conjHar}, and the mean-value theorem, there exists a sequence $\tau_{i}\rightarrow 0$, such that
\begin{equation*}
\lim_{\tau_{i}\rightarrow 0}\tau_{i}^2\int_{M}\Big(|\alpha+\text{Hess}h-\frac{g}{2\tau_i}|^2+\frac{1}{2}\D_{\alpha}(\nabla{h})\Big)H\Phi d\mu=0.
\end{equation*}
Now standard inequalities yield that,
\begin{align*}
&\Big[\int_{M}\tau_{i}(\SS+\triangle{h}-\frac{n}{2\tau_{i}})H\Phi d\mu\Big]^2 \\
\leq& \Big[\int_{M}\tau_{i}^2(S+\triangle{h}-\frac{n}{2\tau_{i}})^2H\Phi d\mu\Big] \Big[\int_{M}H\Phi d\mu\Big]\\
 \leq& n \Big[\int_{M}\tau_{i}^2|\mathcal{S}+\text{Hess}h-\frac{g}{2\tau_i}|^2H\Phi d\mu\Big] \Big[\int_m H\Phi d\mu\Big].
\end{align*}
Since $$\lim_{\tau_{i}\rightarrow 0}\int_{M}H\Phi d\mu=\Phi(y,T)<\infty,$$ and $\frac{1}{2}\D_{\alpha}(\nabla{h})\geq 0$, we derive that
\begin{equation*}
\lim_{\tau_{i}\rightarrow 0}\int_{M}\tau_{i}(S+\triangle{h}-\frac{n}{2\tau_{i}})H\Phi d\mu=0.
\end{equation*}
Therefore, by Lemma \ref {integralboundabove},
\begin{align*}
\lim_{t\rightarrow T}\rho_{\Phi}(t)&=\lim_{\tau_{i}\rightarrow 0}\int_M \Big[\tau_{i}(2\triangle{h}-|\nabla{h}|^2+S)+h-n\Big]H\Phi d\mu\\
&=\lim_{\tau_{i}\rightarrow 0}\int_{M}\Big[\tau_{i}(\triangle{h}-|\nabla{h}|^2)+h-\frac{n}{2}\Big]H\Phi d\mu\\
&=\lim_{\tau_{i}\rightarrow 0}\Big[\int_{M}-\tau_{i}H\triangle{\Phi}d\mu+\int_{M}(h-\frac{n}{2})H\Phi d\mu\Big]\\
&=\lim_{\tau_{i}\rightarrow 0}\int_{M}(h-\frac{n}{2})H\Phi d\mu\leq 0.
\end{align*}
So $\beta\leq 0$. To show that equality holds, we proceed by contradiction. Without loss of generality, we may assume $\Phi(y,T)=1$. Let $H\Phi=(4\pi\tau)^{-n/2}e^{-\tilde{h}}$ (that is, $\tilde{h}=h-\ln{\Phi}$), then integrating by parts yields,
\begin{equation}
\rho_{\Phi}(t)=\mathbb{W}_{\alpha}(g,\tau,\tilde{h})+\int_{M}\Big(\tau(\frac{|\nabla{\Phi}|^2}{\Phi})-\Phi\ln{\Phi}\Big)H d\mu.
\end{equation}
By the choice of $\Phi$ the last term converges to 0 as $\tau\rightarrow 0$. So if $\lim_{t\rightarrow T}\rho_{\Phi}(t)=\beta<0$ then $\lim_{\tau\rightarrow 0}\mu_{\alpha}(g,\tau)<0$ and, thus, contradicts Lemma \ref{basicmu}. Therefore $\beta=0$.
\end{proof}
Now Theorem \ref{mainth} follows immediately.
\begin{proof}(Theorem \ref{mainth})
 Recall from inequality (\ref{evolintegral})
$$\partial_{t}\int_M v\Phi d\mu=\int_{M}(\Box{\Phi}v-\Phi\Box^{\ast}v)d\mu\geq 0.$$
By Proposition \ref{limit0}, $$\lim_{t\rightarrow T}\int_{M}v\Phi d\mu=0.$$ Since $\Phi$ is arbitrary, $v\leq 0$.
\end{proof}

\def\cprime{$'$}
\bibliographystyle{plain}
\bibliography{bio}

\end{document}